\theoremstyle{plain}
\newtheorem{thm}{Theorem}[section]
\newtheorem{cor}[thm]{Corollary}
\newtheorem{prop}[thm]{Proposition}
\theoremstyle{definition}
\newtheorem{defn}{Definition}[section]
\theoremstyle{remark}
\newtheorem{rem}{Remark}[section]
\theoremstyle{Example}
\numberwithin{equation}{section}
\begin{document}
\title{Clique Polynomials and Chordal Graphs
}
\author{Hossein Teimoori Faal}

\address{Department of Mathematics and Computer Science, 
Allameh Tabatabai University, Tehran, Iran}

\email{hossein.teimoori@atu.ac.ir\\ hossein.teimoori@gmail.com}

\maketitle

\begin{abstract}
The ordinary generating function of the number 
of complete subgraphs 
of $G$ is called a clique polynomial of $G$ and is denoted by $C(G,x)$. A real root of $C(G,x)$ is called a clique root of the graph 
$G$. Hajiabolhasan and Mehrabadi showed that the 
clique polynomial has always a real root in the interval 
$[-1,0)$. Moreover, they showed that the class of triangle-free
graphs has only clique roots. Here, we generalize their 
result by showing that the class of $K_4$-free chordal 
graphs has also only clique roots. Moreover, we show that this class has always a clique root $-1$. We finally conclude the paper
with several important questions and conjectures.

\end{abstract}

\section{Introduction and Motivation}
For a finite and simple graph $G= (V, E)$, a complete subgraph of $G$ on $k$ vertices is called 
a $k$-\emph{clique}. For a subset $U \subseteq V(G)$, the subgraph induced on $U$ will be denoted
by $G[U]$. We recall that an edge which joins two vertices of a cycle but is not itself an edge of the cycle is called
a \emph{chord} of the cycle. A graph
is 
\emph{chordal} 
if each cycle of length at least four has a chord. We also recall that
the \emph{Clique Polynomial} $[1]$ of the graph $G$, denote it by 
$C(G,x)$ is defined, as follows
\begin{equation}
C(G,x):= 1 + \sum_{\emptyset \neq U \subseteq V(G):G[U]~is~a~clique}x^{\vert U\vert}, 
\end{equation} 
or equivalently 
\begin{equation}
C(G,x):= 1 + \sum_{k=1}^{\omega(G)} c_{k}(G), 
\end{equation}
in which $c_{k}(G)$ is the number of 
$k$-cliques in $G$ and $\omega(G)$ is the size of 
the largest clique of $G$. 
\\ 
Hajiabolhasan and Mehrabadi $[1]$ showed that the
clique polynomial of any simple graph has always a real root
in the interval $[-1,0)$. 
We will call it the \emph{clique root} of $G$. 
They also 
showed that class of triangle - free graphs has 
only clique roots. 
\\ 
In the recent years, proving that a particular class of graphs polynomial has 
only real roots has attracted the attention of many researchers working 
in the area of algebraic graph theory. This is partly because of the 
interesting properties of polynomials with only real roots 
like being \emph{unimodal}. 
\\
Our main goal here is to contribute in this line of research by 
extending the above result for the class of 
$K_4$ - free chordal graphs. More precisely, we will prove 
the following.  
\begin{thm}\label{keythm1}
The class of $K_4$ - free chordal graphs has only clique 
roots. In particular, they have always a clique root $-1$. 
\end{thm}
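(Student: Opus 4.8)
The plan is to use that $K_4$-freeness forces $\omega(G)\le 3$, so the clique polynomial is a cubic
\[
C(G,x)=1+c_1(G)\,x+c_2(G)\,x^2+c_3(G)\,x^3,
\]
where $c_1,c_2,c_3$ count the vertices, edges and triangles of $G$. I read the statement for connected $G$, since the clique root $-1$ genuinely needs this: two disjoint triangles give $C=1+6x+6x^2+2x^3$, whose derivative is $6(x+1)^2$, so it is strictly increasing and has a single real root. The strategy is threefold: (i) show $-1$ is a root, (ii) divide it out, and (iii) reduce ``only real roots'' to a single discriminant inequality.

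For (i), note first that $C(G,-1)=0$ is equivalent to $c_1-c_2+c_3=1$, i.e.\ to the vanishing of the reduced Euler characteristic of the clique (flag) complex of $G$; for a connected chordal graph this complex is contractible, which already yields the clique root $-1$. To keep the argument elementary I would instead induct using a simplicial vertex $w$ (one exists since $G$ is chordal): because $N(w)$ is a clique, every clique through $w$ has the form $\{w\}\cup S$ with $S\subseteq N(w)$, so
\[
C(G,x)=C(G-w,x)+x\,(1+x)^{\deg w}.
\]
Evaluating at $x=-1$ kills the second term whenever $\deg w\ge 1$, giving $C(G,-1)=C(G-w,-1)$. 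The one subtlety is that $G-w$ must remain connected for the inductive hypothesis to apply, so I would take $w$ to be a simplicial \emph{non-cut} vertex, which always exists in a connected chordal graph (choose a leaf of a clique tree and a vertex of that clique lying outside its separator). The base case is a single vertex, where $C=1+x$.

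Dividing out the root yields
\[
C(G,x)=(1+x)\bigl(c_3\,x^2+(c_1-1)\,x+1\bigr),
\]
whose validity is exactly the relation $c_1-c_2+c_3=1$ from step (i). Hence ``only clique roots'' is equivalent to the quadratic factor being real-rooted, i.e.\ to the discriminant inequality
\[
(c_1-1)^2\ \ge\ 4\,c_3 .
\]
When $c_3=0$ the factor is linear and the inequality is vacuous, recovering the triangle-free (forest) case; so the heart of the matter, and the step I expect to be the main obstacle, is this bound on the number of triangles in terms of the number of vertices.

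I would prove $(c_1-1)^2\ge 4c_3$ by the same peeling induction. Removing a simplicial non-cut vertex $w$, $K_4$-freeness forces $N(w)$ to be a clique of size $\deg w\le 2$, while connectedness forces $\deg w\ge 1$; the number of triangles through $w$ is $0$ if $\deg w=1$ and $1$ if $\deg w=2$. Writing $D:=(c_1-1)^2-4c_3$ and tracking the changes in $c_1$ and $c_3$, a short computation gives $D=D'+(2c_1-3)$ when $\deg w=1$ and $D=D'+(2c_1-7)$ when $\deg w=2$, where $D'\ge 0$ is the corresponding value for $G-w$. The first expression is clearly nonnegative; the second is nonnegative once $c_1\ge 4$, and the only remaining case $c_1=3,\ \deg w=2$ forces $G=K_3$, for which $D=0$ directly. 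This closes the induction and, together with (i)--(iii), shows that every connected $K_4$-free chordal graph has only clique roots and has $-1$ among them.
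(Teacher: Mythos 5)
Your proof is correct, but it takes a genuinely different route from the paper's in both halves. For the clique root $-1$, the paper uses the clique-sum decomposition of a chordal graph into complete graphs pasted along cliques, which yields the closed form $C(G,x)=\sum_i(x+1)^{n_i}-\sum_j(x+1)^{l_j}$ and hence the root $-1$ (with its multiplicity) at once; you instead peel one simplicial vertex at a time via $C(G,x)=C(G-w,x)+x(1+x)^{\deg w}$. These are two faces of the same chordal structure, but your vertex-by-vertex recursion is more elementary, and your observation that connectivity is genuinely needed (two disjoint triangles) makes explicit a hypothesis the paper only adopts as a convention. The real divergence is in the second half: the paper shows that the quadratic factor $Q(G,x)=1+(n-1)x+(m-n+1)x^2$ is the clique polynomial of a triangle-free graph $\tilde G$ obtained by deleting the edges of a spanning tree and its root, and then invokes Hajiabolhassan--Mehrabadi's theorem that triangle-free graphs have only clique roots; you prove the equivalent discriminant inequality $(n-1)^2\ge 4(m-n+1)$ directly by the same simplicial-peeling induction, using $K_4$-freeness only through $\deg w\le 2$. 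Your version is self-contained (no appeal to [1]) and, more importantly, avoids the delicate point in the paper's construction: it is not automatic that deleting an arbitrary spanning tree from a $K_4$-free chordal graph destroys every triangle, so the paper's argument requires a careful choice of tree, whereas your induction sidesteps the issue entirely. One small simplification: in a connected graph on at least two vertices every simplicial vertex is automatically a non-cut vertex (its neighborhood is a clique, so any path through it can be rerouted), so the clique-tree argument for locating a simplicial non-cut vertex is not needed.
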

We will also give the following immediate corollary of the above theorem, which is indeed a new algebraic proof of 
Turan's graph theorem for planar $K_{4}$ - free graphs.  
\begin{cor}\label{keycor1}
If $G$ is a $K_{4}$ - free 
connected planar graph 
with $n$ vertices and $m$ edges, then we have
\begin{equation}
m \leq \frac{n^2}{3}\nonumber 
\end{equation}
\end{cor}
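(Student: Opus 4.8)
The plan is to read the bound directly off the structure of the clique polynomial as a cubic, using only real-rootedness together with Newton's inequalities. First I would record that since $G$ is $K_4$-free we have $\omega(G) \leq 3$, so by the definition of $C(G,x)$ the clique polynomial collapses to
\[
C(G,x) = 1 + nx + mx^2 + tx^3,
\]
where $n = c_1(G)$ is the number of vertices, $m = c_2(G)$ the number of edges, and $t = c_3(G)$ the number of triangles. The whole corollary will then be an inequality among the three lowest coefficients $1, n, m$, so the leading term plays no role beyond guaranteeing the degree.

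The key input is that $C(G,x)$ has only real roots, which is exactly the conclusion of Theorem \ref{keythm1}. Once real-rootedness is in hand, I would apply Newton's inequalities to the coefficient sequence $(a_0, a_1, a_2, a_3) = (1, n, m, t)$. Writing the averaged coefficients $p_k = a_k / \binom{3}{k}$, Newton's inequality at the index $k = 1$ reads $p_1^2 \geq p_0 p_2$, that is,
\[
\left(\frac{n}{3}\right)^2 \geq 1 \cdot \frac{m}{3},
\]
which rearranges at once to $m \leq n^2/3$, precisely the asserted Turán-type bound; thus the real-rootedness supplied by the theorem does all the work. I would also note that the complementary root at $-1$ from Theorem \ref{keythm1} gives the identity $t = m - n + 1$, which, while not needed for the inequality itself, pins down the triangle count and is useful for analyzing equality: equality in Newton's inequality forces $C(G,x)$ to be a perfect cube $t(x+r)^3$, the algebraic signature of the extremal balanced tripartite Turán graph.

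The main obstacle is reconciling the hypotheses. Theorem \ref{keythm1} delivers real-rootedness for $K_4$-free \emph{chordal} graphs, whereas the corollary is stated for $K_4$-free \emph{planar} graphs, and these two classes are not nested (for instance $C_4$ is planar but not chordal). I expect the proof to bridge this either by arguing that the required real-rootedness persists across the planar class, or by passing to a chordal graph on the same vertex set whose middle-coefficient inequality dominates the one for $G$. A secondary, routine point is the degenerate range $\omega(G) < 3$: when $G$ is triangle-free the polynomial is the quadratic $1 + nx + mx^2$, and the analogous Newton inequality $\left(n/2\right)^2 \geq m$ yields the even stronger bound $m \leq n^2/4$, comfortably inside $n^2/3$, so the small cases cause no difficulty.
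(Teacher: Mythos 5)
Your central derivation is correct, and it is a genuinely different finish from the paper's. The paper first divides out the guaranteed root $-1$ and applies the discriminant condition to the quadratic cofactor $Q(G,x) = 1 + (n-1)x + (m-n+1)x^2$, obtaining $m \leq \left(\frac{n+1}{2}\right)^{2} - 1$, and then needs the separate comparison $\left(\frac{n+1}{2}\right)^{2} - 1 \leq \frac{n^2}{3}$, which it verifies via $(n-3)^2 \geq 0$. Your Newton inequality at $k=1$ on the full cubic, namely $\left(\frac{n}{3}\right)^2 \geq 1 \cdot \frac{m}{3}$, lands on $m \leq \frac{n^2}{3}$ in a single step and does not even use the root at $-1$; the indexing is right ($p_k = a_k/\binom{3}{k}$, real-rootedness supplied by Theorem \ref{keythm1}), your separate treatment of the triangle-free case is correct (there real-rootedness is Hajiabolhassan--Mehrabadi's theorem, and the quadratic Newton inequality is just the discriminant), and your equality remark is sound for a cubic since $e_1^2 - 3e_2 = \frac{1}{2}\sum_{i<j}(r_i - r_j)^2$. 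One trade-off worth noting: the paper's discriminant route actually proves the \emph{sharper} intermediate bound $m \leq \left(\frac{n+1}{2}\right)^2 - 1 \sim \frac{n^2}{4}$, which your $k=1$ Newton inequality discards; your approach buys directness, the paper's buys a stronger estimate.

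On the planar-versus-chordal mismatch you flagged: the paper resolves it by exactly your second suggested bridge, asserting in one sentence that ``without loss of generality we assume that the graph $G$ is chordal'' since only an upper bound on $m$ is sought --- i.e., it implicitly passes to a $K_4$-free chordal supergraph. You were right to hesitate rather than assert this, because the reduction is unjustified in the paper and is in fact false as stated: the wheel $W_5$ (a $5$-cycle plus a hub joined to all rim vertices) is connected, planar and $K_4$-free; its only non-edges are rim chords, and adding any rim chord $v_1v_3$ creates the $K_4$ on $\{h, v_1, v_2, v_3\}$; yet its rim is an induced $C_5$, so $W_5$ is not chordal, and since any chordal supergraph on the same vertex set must add a chord of that $C_5$, no $K_4$-free chordal supergraph exists. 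So your proposal leaves open the same hole the paper papers over, and neither argument as written covers all planar inputs. To make the planar statement airtight one can bypass clique polynomials entirely: Euler's formula gives $m \leq 3n - 6 \leq \frac{n^2}{3}$ for $n \geq 6$, and the cases $n \leq 5$ are checked by hand; alternatively, the corollary should be read as a statement about chordal inputs, where Theorem \ref{keythm1} (and your Newton argument) applies verbatim.
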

\section{Chordal Graphs and Clique Polynomials}
In this section, we investigate the important class of \emph{chordal graphs}. This class of graphs 
is very important in computer science, specially from the computational complexity view point. Many hard 
problems in general graphs have easy solutions in the class of chordal graphs. As we will see, the 
clique polynomial of chordal graphs can give us important insights into the structure of these graphs. 
\begin{defn}
A graph is \emph{chordal} if every cycle of length greater than three has a chord. 
A vertex of a graph is \emph{simplicial} if its neighbors induces a 
clique in the graph. 
\end{defn} 
One of the important properties of a chordal graph is that it has always a \emph{clique decomposition}. For the sake completeness, here 
we quickly review the idea of decomposing a chordal graph into cliques. For detailed information, one can refer to $[1]$. 
\\
\begin{defn}
For given graphs $G_{1}$ and $G_{2}$, we say that a graph $G$ arises from $G_{1}$ and $G_{2}$ \emph{by pasting along} $S$ 
if we have $G_{1}\cup G_{2}= G$ and $G_{1} \cap G_{2} = S$. In this case, the graphs $G_{i}$ are called the 
\emph{simplicial summands} of $G$.  
\end{defn}    

\begin{rem}
From the above definition, it is clear that a graph is chordal 
if it can be constructed recursively by pasting complete graphs along cliques. 
It is not hard to see that this process is independent of the order in which complete graphs
paste to each other. Indeed, this recursive construction gives us a clique decomposition of chordal 
graphs which is essential to obtain their clique polynomials. 
\end{rem}
For simplicity of arguments, we use the notation 
$G_{1}\cup_{S} G_{2}$ whenever $G_{1}$ and $G_{2}$
are pasted along $S$. The following lemma is key to 
obtain an explicit formula for the clique polynomial 
of chordal graphs. The proof is straight forward and left 
to the reader as a simple exercise.    
\begin{prop}
Let $G_1$ and $G_2$ be two simple graphs and 
$G = G_{1}\cup_{Q} G_{2}$ be their pasting along the
$i$ - clique $Q$. Then, we have 
\begin{equation}\label{keypast1}
C(G,x)
=
C(G_{1},x) + C(G_{2},x) - (x+1)^{i}, \hspace{1cm}
(i \geq 1).
\end{equation} 

\end{prop}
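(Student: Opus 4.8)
The plan is to exploit the fact that the clique polynomial is, by its very definition, a generating function that weights \emph{every} clique $U$ of the graph (including the empty clique, which contributes the constant term $1$) by $x^{|U|}$. From this viewpoint the identity (\ref{keypast1}) becomes nothing more than an inclusion--exclusion statement over the set of cliques of $G$, and the whole task reduces to understanding how the cliques of $G = G_{1}\cup_{Q} G_{2}$ are distributed among $G_{1}$, $G_{2}$, and their common part $Q$.

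First I would establish the key structural observation: every clique of $G$ is contained entirely in $V(G_{1})$ or entirely in $V(G_{2})$. Indeed, in the pasting we have $E(G) = E(G_{1}) \cup E(G_{2})$ and $V(G_{1}) \cap V(G_{2}) = V(Q)$. Suppose, toward a contradiction, that a clique $U$ contained a vertex $a \in V(G_{1}) \setminus V(Q)$ together with a vertex $b \in V(G_{2}) \setminus V(Q)$. Since $U$ is a clique, $ab$ would be an edge of $G$, hence an edge of $G_{1}$ or of $G_{2}$; but $a \notin V(G_{2})$ and $b \notin V(G_{1})$, so $ab$ can lie in neither edge set, a contradiction. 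I would also note that for $U \subseteq V(G_{1})$ the induced subgraphs $G[U]$ and $G_{1}[U]$ coincide, so that ``a clique of $G$ lying inside $V(G_{1})$'' and ``a clique of $G_{1}$'' are literally the same objects. Consequently the set of cliques of $G$ is exactly the union of the cliques of $G_{1}$ and the cliques of $G_{2}$.

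Next I would identify the overlap. A subset $U$ is simultaneously a clique of $G_{1}$ and of $G_{2}$ precisely when $U \subseteq V(G_{1}) \cap V(G_{2}) = V(Q)$; and since $Q$ is an $i$-clique, \emph{every} one of the $2^{i}$ subsets of its $i$ vertices is itself a clique. Summing the weight $x^{|U|}$ over all such $U$ therefore yields the generating function for subsets of an $i$-element set, namely $(x+1)^{i}$.

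With these two facts in hand, the proposition follows by inclusion--exclusion applied to the weighted count: the sum of $x^{|U|}$ over the cliques of $G$ equals the same sum over the cliques of $G_{1}$, plus that over the cliques of $G_{2}$, minus the sum over the cliques common to both, which lie in $Q$. This is exactly $C(G_{1},x) + C(G_{2},x) - (x+1)^{i}$. The only step demanding genuine care is the structural claim, namely that no clique can straddle the two ``private'' parts $V(G_{1})\setminus V(Q)$ and $V(G_{2})\setminus V(Q)$; once that is secured, and once one checks that the empty clique is accounted for consistently by the constant term $1$ on both sides, the remainder is bookkeeping that the inclusion--exclusion handles automatically.
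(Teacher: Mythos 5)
Your proof is correct and complete; it is precisely the straightforward inclusion--exclusion argument that the paper itself omits (the proposition's proof is explicitly ``left to the reader as a simple exercise''). The two points you single out --- that no clique of $G$ can contain vertices from both $V(G_1)\setminus V(Q)$ and $V(G_2)\setminus V(Q)$, and that the cliques common to $G_1$ and $G_2$ are exactly the $2^i$ subsets of $V(Q)$ contributing $(x+1)^i$ --- are exactly the content of the identity, and your bookkeeping of the empty clique against the constant term $1$ is handled consistently.
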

By the successive application of the formula
(\ref{keypast1})
and the 
recursive construction of chordal graphs, we can 
obtain the following explicit 
formula for the clique polynomial of chordal graphs. 
\begin{thm}
Let $G$ be a chordal graph defined as a pasting 
of the complete graphs 
$\{G_{i}\}^{r}_{i=1}$ of sizes $n_{i}$'s, respectively.
That is, 
$
G = 
G_{1}\cup_{Q_{1}} G_{2} \cup_{Q_{2}} \cdots
\cup_{Q_{r-1}} G_{r},
$ 
where 
$\{Q_{j}\}^{r-1}_{j=1}$ are cliques of sizes
$l_{j}$'s, respectively. Then, we have
\begin{equation}
C(G,x)
=
\sum_{i=1}^{r}(x+1)^{n_i} - 
\sum_{j=1}^{r-1}(x+1)^{l_j}
\end{equation}
  
\end{thm}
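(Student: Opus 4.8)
The plan is to argue by induction on the number $r$ of complete summands, using the pasting formula (\ref{keypast1}) as the single inductive engine. The starting observation is that for a complete graph $K_n$ every nonempty vertex subset induces a clique, so $c_k(K_n) = \binom{n}{k}$ and hence $C(K_n,x) = \sum_{k=0}^{n} \binom{n}{k} x^k = (x+1)^n$ by the binomial theorem. This settles the base case $r = 1$ at once: there $G = G_1 = K_{n_1}$, the second sum is empty, and both sides of the asserted identity equal $(x+1)^{n_1}$.

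For the inductive step I would peel the last summand $G_r = K_{n_r}$ off the decomposition. Writing $G' = G_{1}\cup_{Q_{1}} \cdots \cup_{Q_{r-2}} G_{r-1}$ for the chordal graph built from the first $r-1$ complete graphs, the construction presents $G$ as $G' \cup_{Q_{r-1}} G_r$, a single pasting along the $l_{r-1}$-clique $Q_{r-1}$. Applying (\ref{keypast1}) gives
\begin{equation}
C(G,x) = C(G',x) + C(G_r,x) - (x+1)^{l_{r-1}}. \nonumber
\end{equation}
By the induction hypothesis $C(G',x) = \sum_{i=1}^{r-1}(x+1)^{n_i} - \sum_{j=1}^{r-2}(x+1)^{l_j}$, and by the base observation $C(G_r,x) = (x+1)^{n_r}$. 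Substituting these and absorbing the two new terms $(x+1)^{n_r}$ and $-(x+1)^{l_{r-1}}$ into the first and second sums respectively extends the index ranges to $i = r$ and $j = r-1$, yielding exactly $\sum_{i=1}^{r}(x+1)^{n_i} - \sum_{j=1}^{r-1}(x+1)^{l_j}$ and closing the induction.

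The one step that needs care — and where I expect the real content to sit — is the legitimacy of the peeling: I must know that $G$ genuinely decomposes as $G' \cup_{Q_{r-1}} G_r$ with $G' \cap G_r$ equal to the single clique $Q_{r-1}$, so that (\ref{keypast1}) applies verbatim. This is precisely what the \emph{order-independence} of the clique decomposition (noted in the preceding Remark) supplies: since the recursive pasting construction of a chordal graph does not depend on the order in which the complete graphs are attached, one may always arrange for $G_r$ to be attached last, meeting the union of the earlier summands in exactly one clique. To make this rigorous I would invoke the standard structural fact that a chordal graph carries a \emph{clique tree} whose leaves can be pruned one at a time; each pruned leaf is a maximal clique meeting the remainder of the graph in a single separating clique, which is exactly the hypothesis that (\ref{keypast1}) requires. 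Once this structural point is granted, the remaining algebra is the routine telescoping carried out above.
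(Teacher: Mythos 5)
Your proposal is correct and follows exactly the route the paper intends: the paper's own (one-line) justification is ``successive application of the pasting formula together with the recursive construction of chordal graphs,'' which is precisely your induction on $r$ with base case $C(K_n,x)=(x+1)^n$. You have simply made explicit the details the paper leaves implicit, including the structural point about reordering the summands so that the last one meets the rest in a single clique.
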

As an immediate consequence of 
the above theorem, we have the following 
interesting result. 
\begin{cor}
Any chordal graphs $G$ without isolated vertices 
has always a clique root $-1$
.
The multiplicity of this root is equal to the 
size of the smallest clique in the pasting process of the 
recursive construction of $G$.   
\end{cor}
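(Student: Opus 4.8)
The plan is to read off both assertions directly from the explicit formula of the preceding theorem. Setting $y = x+1$, the decomposition $G = G_{1}\cup_{Q_{1}}\cdots\cup_{Q_{r-1}} G_{r}$ gives
\[
C(G,x) = \sum_{i=1}^{r} y^{n_i} - \sum_{j=1}^{r-1} y^{l_j},
\]
a polynomial in $y$ every one of whose exponents $n_i$ and $l_j$ is at least $1$ (each complete summand is nonempty and each separating clique $Q_j$ is nonempty). Hence the expression has no constant term in $y$, so $y=0$, i.e. $x=-1$, is a root of $C(G,x)$; this settles the existence claim with essentially no computation.

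For the multiplicity I would observe that the multiplicity of the root $x=-1$ equals the order of vanishing at $y=0$, that is, the least exponent $m$ whose coefficient in $\sum_i y^{n_i}-\sum_j y^{l_j}$ is nonzero. Writing $a_t=\#\{i: n_i=t\}$ and $b_t=\#\{j: l_j=t\}$, the coefficient of $y^{t}$ is $a_t-b_t$, so the whole problem reduces to locating the least $t$ with $a_t\neq b_t$ and identifying it with the size of the smallest clique in the pasting.

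The key step, and the one I expect to be the main obstacle, is to show that this least exponent is attained by a separating clique and that no cancellation occurs there. For this I would invoke the clique-tree structure of the decomposition into maximal cliques: each separator $Q_j$ is a \emph{proper} subclique of each of the two maximal cliques it joins, so $l_j < n_i$ for every maximal clique $G_i$ incident to $Q_j$. Since the decomposition tree is connected with $r$ nodes and $r-1$ edges, every maximal clique is incident to at least one separator when $r\ge 2$; applying this to a smallest maximal clique $G_{i_0}$ gives $\min_j l_j \le l_{j_0} < n_{i_0} = \min_i n_i$. Consequently the least exponent is $m=\min_j l_j$, it receives no contribution from the $\sum_i y^{n_i}$ terms, and its coefficient is $-b_m = -\#\{j: l_j = m\} < 0$, hence nonzero. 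Therefore the multiplicity equals $\min_j l_j$, the size of the smallest separating clique arising in the pasting process.

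Finally I would dispose of the degenerate case $r=1$, in which $G=K_{n_1}$ and $C(G,x)=(x+1)^{n_1}$ has $-1$ as a root of multiplicity $n_1$; here the only clique of the trivial construction has size $n_1$, so the identity ``multiplicity $=$ size of the smallest clique in the construction'' continues to hold, covering all chordal graphs without isolated vertices uniformly. The only genuinely delicate point throughout is the strict properness $l_j<n_i$ at each incidence, which guarantees that the lowest-order term cannot be cancelled; the rest is the substitution $y=x+1$ and bookkeeping of exponents.
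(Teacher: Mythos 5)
Your proof is correct and follows the same route the paper intends: the corollary is stated there as an immediate consequence of the explicit formula $C(G,x)=\sum_{i}(x+1)^{n_i}-\sum_{j}(x+1)^{l_j}$, and you supply exactly the bookkeeping the paper omits, in particular the strict inequality $\min_j l_j<\min_i n_i$ (from each separator being a proper subclique of the adjacent maximal cliques) that rules out cancellation of the lowest-order term in $y=x+1$. The one point worth flagging is that your necessary assumption that every pasting clique $Q_j$ is nonempty amounts to connectivity of $G$, which the paper only imposes later; for a disconnected chordal graph without isolated vertices (two disjoint edges, say) the claim fails, so your reading is the correct one.
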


\section{Proofs of the Main Results}
Here, we first give a proof of
the following proposition which is  
a weaker version of Theorem  
\ref{keythm1}.
From now on, for simplicity of arguments,
we will assume that our graphs are connected. 
\begin{prop}\label{keyprop1}
The class of 
$K_{4}$ - free planar chordal graphs 
has only clique roots. In particular, $-1$ is always a clique root. 
\end{prop}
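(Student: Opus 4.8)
The plan is to read the clique polynomial straight off the definition and then collapse the whole statement to a single discriminant inequality. Since $G$ is $K_4$-free we have $\omega(G)\le 3$, so writing $n$, $m$, $t$ for the numbers of vertices, edges and triangles of $G$, the clique polynomial is the cubic $C(G,x)=tx^3+mx^2+nx+1$. As $G$ is connected with at least two vertices it has no isolated vertex, so the Corollary of Section~2 guarantees that $-1$ is a clique root; equivalently $C(G,-1)=1-n+m-t=0$, i.e. $m-t=n-1$. Dividing out this known root gives the factorisation $C(G,x)=(x+1)\bigl(tx^2+(m-t)x+1\bigr)=(x+1)\bigl(tx^2+(n-1)x+1\bigr)$, which already displays $-1$ as a clique root.

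It then remains to show that the quadratic factor $q(x)=tx^2+(n-1)x+1$ splits over $\mathbb{R}$. If $t=0$ then $G$ is triangle-free and the claim is the theorem of Hajiabolhasan and Mehrabadi, so I would assume $t\ge 1$. In that case $q$ has only real roots exactly when its discriminant $\Delta=(n-1)^2-4t$ is non-negative, so the entire proposition reduces to the bound $t\le n-2$ on the number of triangles: this gives $\Delta\ge (n-1)^2-4(n-2)=(n-3)^2\ge 0$.

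The heart of the proof is thus the combinatorial inequality $t\le n-2$, equivalently $m\le 2n-3$, and I would derive it from the decomposition machinery of Section~2. Realise $G$ as a pasting $G_1\cup_{Q_1}\cdots\cup_{Q_{r-1}}G_r$ of its maximal cliques, which (being $K_4$-free) are all edges or triangles, the separators $Q_j$ being single vertices or edges. Letting $c_2,c_3$ count the maximal edges and triangles and $d_1,d_2$ the vertex- and edge-separators, one has $d_1+d_2=r-1=c_2+c_3-1$ together with $t=c_3$, $m=c_2+3c_3-d_2$ and $n=2c_2+3c_3-d_1-2d_2$; a short substitution then yields $2n-3-m=d_1\ge 0$. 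Alternatively, a perfect elimination ordering of $G$ works even faster: each vertex has at most $\omega(G)-1\le 2$ later neighbours, while the last two contribute at most $1$ and $0$, so $m\le 2(n-2)+1=2n-3$.

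The hard part will be only this last bookkeeping: one must be sure that every triangle of $G$ is a maximal clique — which holds precisely because $G$ is $K_4$-free, giving $t=c_3$ — and that the separators genuinely have size at most two, so that the edge and vertex counts are as claimed. Once $t\le n-2$ is established the proposition is immediate. I would also remark that planarity is never actually used in the real-rootedness argument; it is retained here because the companion Tur\'an-type bound $m\le n^2/3$ of Corollary~1.2 is traditionally phrased for planar graphs and follows from the real-rootedness just proved via the Newton inequality $n^2\ge 3m$, and because removing planarity altogether is exactly the content of Theorem~\ref{keythm1}.
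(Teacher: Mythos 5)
Your argument is correct, but it reaches the conclusion by a genuinely different route than the paper. For the root $-1$ the paper uses planarity: Euler's formula $n-m+f=2$ together with the observation that every bounded face of a planar chordal $K_4$-free graph is a triangle ($f=t+1$) gives $1-n+m-t=0$; you instead invoke the clique-decomposition corollary of Section~2, which needs no planarity. For the quadratic factor $tx^2+(n-1)x+1$ (the same polynomial as the paper's $1+(n-1)x+(m-n+1)x^2$, since $m-n+1=t$), the paper does not estimate the discriminant at all: it constructs a triangle-free graph $\tilde G$ on $n-1$ vertices and $m-n+1$ edges by deleting the edges of a spanning tree and the root vertex, identifies the quadratic as $C(\tilde G,x)$, and then cites the Hajiabolhassan--Mehrabadi theorem that triangle-free graphs have only clique roots. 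You instead prove the discriminant bound directly, reducing everything to $t\le n-2$ (equivalently $m\le 2n-3$), which your perfect-elimination-ordering count establishes cleanly; your clique-tree bookkeeping even identifies the defect $2n-3-m$ as the number of cut-vertex separators. Each approach buys something: the paper's is structural and recycles the triangle-free result, whereas yours is self-contained, elementary, and --- as you note --- never uses planarity, so it proves Theorem~\ref{keythm1} outright rather than only the planar special case. Your route also sidesteps the most fragile step of the paper's proof, namely the claim that removing an \emph{arbitrary} spanning tree from a $K_4$-free chordal graph leaves a triangle-free graph; as stated that needs a careful choice of tree, while your discriminant bound requires no such choice. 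The only points you must be careful about, and which you correctly flag, are that in a $K_4$-free graph every triangle is a maximal clique (so $t=c_3$) and that all separators have size at most two.
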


\begin{proof}
For a given $K_{4}$ - free planar graph $G$, by \emph{Euler Formula}, we have
\begin{equation}\label{equt1}
n - m + f = 2,
\end{equation}
where $n$, $m$ and $f$ are the number of vertices, edges and faces of a 
\emph{planar embedding} of $G$, respectively. Moreover, if $G$ is a chordal 
graphs, then we observe that 
\begin{equation}\label{equt2}
f = t + 1,
\end{equation}
where $t$ is the number of triangles ( triangular faces ) of $G$. Hence, form 
(\ref{equt1}) and (\ref{equt2}), we conclude that
$$
n - m +  t = 1, 
$$
or equivalently
\begin{equation}
1 - n + m - t = 0,
\end{equation}
By last identity and considering the fact that the clique 
polynomial of a $K_{4}$ - free graph G is 
$C(G,x) = 1 + n x + m x^2 + t x^3$, we get 
$$
C(G,-1) = 1-n + m - t = 0.
$$
That is, the clique polynomial of any 
$K_{4}$ - free planar chordal graph $G$ has always 
$-1$ as 
a clique root. 
Therefore, we obtain the following multiplicative 
decomposition of $C(G,x)$
\begin{equation}
C(G,x)= (1 + x)( 1 + (n-1)x + (m - n +1 )x^2).
\end{equation}
The 
final step of the proof is to show that the 
quadratic 
polynomial:
\begin{equation}
Q(G,x) = 1 + (n-1)x + (m - n +1 )x^2,
\end{equation}
has always a real root. 
To this end, we actually prove that 
$Q(G,x)$ is a clique polynomial of a 
triangle - free graph 
$\tilde{G}$ which is obtained from 
the original graph $G$ based on the 
idea of a spanning tree of $G$. 
\\
For a given $K_{4}$ - free chordal graph $G$, 
pick up an arbitrary vertex $v \in V(G)$. 
Now, we construct a spanning tree of 
$G$ rooted at the vertex $v$. We will denote it by
$T_{G}$. Clearly, this tree has $n$ vertices and 
$n-1$ edges. 
Now in the graph $G$, delete all $n-1$ edges 
of the tree $T_{G}$ and call the resulting graph 
$\hat{G}$. This graph has clearly $n$ 
vertices and $m-(n-1)$ edges. Finally, by construction 
 $\hat{G}$ is triangle - free graph
and $v$ is an isolated vertex.   
Thus, the graph 
$\tilde{G}$ 
obtained from $\hat{G}$ by deleting the vertex 
$v$ has $n-1$ vertices and $m-n+1$ edges, as required.

\end{proof}
\begin{rem}
The following figure shows the process of 
obtaining the graph 
$\tilde{G}$ from the original graph $G$
for a sample graph $G$.

\begin{center}
\begin{tikzpicture}
\draw (0,0) -- (0.5,1) -- (1,0) -- (0,0);
\draw (0,0) -- (0.5,-1) -- (1,0) -- (1.5,1) 
-- (0.5,1);
\draw (1,0) -- (1.5,-1) -- (0.5,-1); 

\draw [fill] (0,0) circle [radius=0.07];
\draw [fill] (0.5,1) circle [radius=0.07];
\draw [fill] (1,0) circle [radius=0.07];
\draw [fill] (0.5,-1) circle [radius=0.07];
\draw [fill] (1.5,-1) circle [radius=0.07];
\draw [fill] (1.5,1) circle [radius=0.07];

\node [left] at (0,0) {$r$};
\node [left] at (2.3,0) {$\Rightarrow$};
\node [left] at (-0.5,0.35) {$G:$};

\draw [] (3.5,0) -- (4,1) ;
\draw [] (3.5,0) -- (4.5,0) ;
\draw [] (3.5,0) -- (4,-1) ;
\draw [] (4,1) -- (5,1) ; 
\draw [] (4.5,0) -- (5,-1) ;

\draw [fill] (3.5,0) circle [radius=0.07];
\draw [fill] (4,1) circle [radius=0.07];
\draw [fill] (4.5,0) circle [radius=0.07];
\draw [fill] (4,-1) circle [radius=0.07];
\draw [fill] (5,-1) circle [radius=0.07];
\draw [fill] (5,1) circle [radius=0.07];

\node [left] at (3.5,0) {$r$};
\node [left] at (5.7,0) {$\Rightarrow$};
\node [left] at (3.0,0.35) {$T_{G}:$};

\draw [] (7.5,1) -- (8,0) ;
\draw [] (8,0) -- (8.5,1) ;
\draw [] (8,0) -- (7.5,-1) ;
\draw [] (7.5,-1) -- (8.5,-1) ;

\draw [fill] (7,0) circle [radius=0.07];
\draw [fill] (7.5,1) circle [radius=0.07];
\draw [fill] (8,0) circle [radius=0.07];
\draw [fill] (7.5,-1) circle [radius=0.07];
\draw [fill] (8.5,-1) circle [radius=0.07];
\draw [fill] (8.5,1) circle [radius=0.07];

\node [left] at (7,0) {$r$};
\node [left] at (9,0) {$\Rightarrow$};
\node [left] at (6.3,0.35) {$\hat{G}:$};

\draw [] (10,1) -- (10.5,0) ;
\draw [] (10.5,0) -- (11,1) ;
\draw [] (10.5,0) -- (10,-1) ;
\draw [] (10,-1) -- (11,-1) ;

\draw [fill] (10,1) circle [radius=0.07];
\draw [fill] (10.5,0) circle [radius=0.07];
\draw [fill] (10,-1) circle [radius=0.07];
\draw [fill] (11,-1) circle [radius=0.07];
\draw [fill] (11,1) circle [radius=0.07];

\node [left] at (9.7,0.35) {$\tilde{G}:$};

\end{tikzpicture}

\end{center}
\begin{center}
\text{
Fig1. The Construction of Triangle-Free graph $\tilde{G}$
}
\end{center}

\end{rem}
Now, we are ready to give a proof of our main 
theorem. 
\begin{proof}[Proof of Theorem \ref{keythm1}]
We first note that by Corollary 
\ref{keycor1}, any connected chordal graph has always a clique root 
$-1$. Now, the rest of the proof is exactly the same as 
the proof of Proposition \ref{keyprop1}. 
\end{proof}

Next we give an algebraic proof of \emph{Turan's Graph Theorem} $[3]$
for $K_{4}$ - free graphs which is indeed 
Corollary \ref{keycor1}. 

\begin{proof}[Proof of Corollary \ref{keycor1}]
We first note that since we want to prove an upper 
bound for the maximum possible number of edges, without 
loss of generality we assume that the graph $G$ is 
chordal. 
\\
As we saw in the proof of Theorem \ref{keythm1}, if $G$ is a given 
$K_{4}$ - free chordal graph with $n$ vertices and $m$ edges, 
then the following \emph{quadratic} equation
$$
Q(G,x) = 1 + (n-1)x + (m - n +1 )x^2,
$$
has only real zeros. Hence, it's \emph{discriminant} is 
nonnegative and therefore we have the following inequality:
$$
(n-1)^{2} - 4(m - n +1) \geq 0,
$$
which is equivalent to 
\begin{equation}\label{ineq1}
m \leq \left(\frac{n + 1}{2}\right)^{2} - 1.
\end{equation} 
On the other hand, we have the inequality 
\begin{equation}\label{ineq2} 
\left(\frac{n + 1}{2}\right)^{2} - 1 \leq \frac{n^{2}}{3},
\end{equation}
which is equivalent to the obvious inequality $ (n-3)^{2} \geq 0$. 
Thus, the inequalities (\ref{ineq1}) and (\ref{ineq2}) 
immediately implies the Turan's inequality 
for $K_{4}$ - free graphs.   
\end{proof}

\section{Open problems and questions}

We already showed that the class of connected $K_{4}$ - free chordal graphs has only 
real roots. Now, one might ask whether the class of connected $K_{5}$ - free chordal 
graphs has the same property or not. 

Unfortunately, this is not true in general. For example, 
the graph $K_{4}^{+}$ ( a complete graph $K_{4}$ plus one edge ) 
has only two clique roots. Indeed, we have 
$$
C(K_{4}^{+}) = 1 + 5x + 7x^2 + 4x^3 + x^4 = (1 + x)(1+ 4x + 3x^2 + x^3). 
$$
Since the cubic polynomial $\phi(x) = 1+ 4x + 3x^2 + x^3$ has the first derivative $\phi(x)' = 3(x+1)^2 + 1$ 
which is always \emph{positive}, by the first derivative criteria,  
$\phi(x) = 1+ 4x + 3x^2 + x^3$ has exactly \emph{one} real root. 
Thus, we come up with the following first open question. 
\\
{\bf Open Question 1}. 
Which subclasses of $K_{5}$ - free chordal 
graphs have only clique roots? 
\\

Recall that the class of $3$ - trees 
are those graphs 
which can be constructed \emph{recursively} by 
starting with a complete graph $K_{4}$, and then 
\emph{repeatedly} adding vertices in such a way that 
each added vertex has exactly \emph{three} neighbors 
that form a clique (triangle). 
\\

By the above definition, it is not hard to see that the class of 
$3$ - tress is a subclass of 
$K_{5}$ - free chordal 
graphs. Next, we come up with the following conjecture.  
\\
{\bf Conjecture 1}. 
The class of $3$ - trees has only clique roots.  
\\
Considering the fact that any connected chordal graph has 
a clique root $-1$ and the recursive definition of 
of chordal graphs, we made the following stronger conjecture. 
\\
{\bf Conjecture 2}.
The class of connected $K_{5}$ - free chordal 
graphs with the clique root $-1$ of multiplicity $2$ 
has only clique roots.

\end{document}